\numberwithin{equation}{section}
\newtheorem{theorem}{Theorem}[section]
\theoremstyle{definition}
\newtheorem{remark}[theorem]{Remark}
\newcommand\restrict{{\vphantom f\mskip1mu\vrule\mskip2mu}}
 \mathchardef\ordinarycolon\mathcode`\:
\newcommand\rlim{
\mathchoice{\vcenter{\hbox{${\scriptstyle{+}}$}}}
{\vcenter{\hbox{$\scriptstyle{+}$}}}
{\vcenter{\hbox{$\scriptscriptstyle{+}$}}}
{\vcenter{\hbox{$\scriptscriptstyle{+}$}}}}
\renewcommand\phi{\varphi}
\newcommand\eps{\varepsilon}
\renewcommand\epsilon{\varepsilon}
\renewcommand\theta{\vartheta}
\newcommand\R{\mathbb R}
\newcommand\N{\mathbb N}
\newcommand\cA{\mathcal A}
\newcommand\eul{{\rm e}}
\newcommand\ndash{\rule[.58ex]{\widthof{--}}{0.065ex}} 
\renewcommand\le{\leqslant}
\renewcommand\ge{\geqslant}
\newcommand\slim{\mathop{\rm s\kern.08em\mbox{\rm -}lim}} 
\let\origthanks\thanks
\renewcommand\thanks[1]{\begingroup\let\rlap\relax\origthanks{#1}\endgroup}
\title{A note on absorption semigroups and regularity}
\author{{Amir Manavi\footnote{Goethestr.\,7,
71332 Waiblingen, Germany,
{\tt 
a.man\rlap{\textcolor{white}{lara@aral}}avi@gm\rlap{\textcolor{white}{%
cannstatt}}x.de}}}, 
{Hendrik Vogt\footnote{Fachbereich Mathematik,
Universit\"at Bremen,
Postfach 330 440,
28359 Bremen, Germany,
{\tt 
hendrik.vo\rlap{\textcolor{white}{hugo@egon}}gt@uni-\rlap{\textcolor{white}{%
hannover}}bremen.de}}} and 
{J\"urgen Voigt\footnote{Technische Universit\"at Dresden,
Fachrichtung Mathematik,
01062 Dresden, Germany,
{\tt 
juer\rlap{\textcolor{white}{xxxxx}}gen.vo\rlap{\textcolor{white}{yyyyyyyyyy}}%
igt@tu-dr\rlap{\textcolor{white}{%
zzzzzzzzz}}esden.de}}}
}
\date{}
\begin{document}

\maketitle

\begin{abstract}
It is shown that, in the theory of absorption semigroups,
two possible ways of defining regularity for absorption rates are in 
fact equivalent.
\vspace{8pt}

\noindent
MSC2010: 47D06, 47A55, 47B60
\vspace{2pt}

\noindent
Keywords: Absorption semigroup, regular absorption rate
\end{abstract}

\section*{Introduction}

The notion of `absorption semigroup' was introduced in \cite{voi-86} in the 
context of absorption rates for positive $C_0$-semigroups acting on 
$L_p$-spaces. Let $(\Omega,\mu)$ be a measure space. Given a positive 
$C_0$-semigroup $T=(T(t))_{t\ge0}$ on $L_p(\mu)$ 
with generator $A$, the aim was to 
associate a $C_0$-semigroup with the formal expression $A-V$ as generator, for 
measurable 
$V\colon\Omega\to\R$ as general as possible. 

One of the properties turning out to be of importance was the regularity of 
absorption rates $V\colon\Omega\to[0,\infty)$. On the one hand, it 
was used to compute the generator of the perturbed 
semigroup in the case $p=1$ (cf.~\cite[Corollary~4.3(b)]{voi-86}), and it was 
also used to show form properties of Schr\"odinger operators 
(cf.~\cite[Proposition~5.8(b)]{voi-86}). 
On the other hand, in~\cite[Theorem~3.5]{voi-88} a seemingly stronger version 
of regularity was used to show a dominated convergence theorem for 
absorption semigroups.

It is the purpose of the present note to show that the two versions of 
regularity are in fact equivalent. 

We mention that the concepts sketched above have been treated 
in more generality in~\cite{man-01}, for absorption semigroups 
on Banach function lattices, and 
in~\cite[Chapter~2]{vog-10}, for the case of propagators instead of 
semigroups.

\section{Equivalence of two notions of regularity}
\label{equiv-regular}

Let $(\Omega,\cA,\mu)$ be a measure space, let 
$p\in[1,\infty)$, and let $T$ be a positive $C_0$-semigroup on $L_p(\mu)$, with 
generator $A$. The following notions have been introduced in \cite{voi-86}. 

If $V\colon\Omega\to[0,\infty]$ or $V\colon\Omega\to[-\infty,0]$,  
then $V$ is called \emph{$T\mkern-1.5mu$-admissible} if $V$ is locally 
measurable,
\begin{equation}\label{sg-conv}
T_V(t):=\slim_{n\to\infty} \eul^{t(A-(V\land n)\lor(-n))}
\end{equation}
exists for all $t\ge0$, and $T_V$ thus defined 
constitutes a $C_0$-semigroup. 
We note that then the convergence stated in \eqref{sg-conv} is uniform for 
$t$ in bounded subsets of~$[0,\infty)$, in short expressed as 
$T_V=\slim_{n\to\infty}T_{(V\land n)\lor(-n)}$.
The generator of $T_V$ will be denoted by $A_V$.

For an absorption rate $V\colon\Omega\to[0,\infty]$ we recall two notions 
of regularity:
$V$ is called \emph{$T\mkern-1.5mu$-regular} if $V$ is $T$-admissible and
\[
T_{0,V}:=\slim_{\eps\to0\rlim}T_{\eps V}=T.
\]
$V$ is called \emph{strongly $T\mkern-1.5mu$-regular} if $V$ is 
$T$-admissible and 
\[
(T_V)_{-V}=\slim_{n\to\infty}T_{V- V\land n}=T.
\]
(The first equality in the previous line is always valid; this 
follows from \cite[Lem\-ma~2.4]{voi-86}.)
Regularity was defined in \cite[Definition~2.12]{voi-86}. Strong 
regularity was defined in \cite[Definition~3.1]{voi-88} (under the notion 
`regular'); we adopt the 
terminology `strongly regular' from \cite[Definition~2.3]{ish-94}. The 
following result shows that the above two notions are equivalent.

\begin{theorem}\label{theorem}
(cf.~\cite[Satz~4.1.54]{man-01})
Let $V\colon\Omega\to[0,\infty]$ be $T\mkern-1.5mu$-admissible. Then 
\[
T_{0,V}=(T_V)_{-V}.
\]
In particular,\/ $V$ is $T\mkern-1.5mu$-regular if and only if\/ $V$ is 
strongly $T\mkern-1.5mu$-regular.
\end{theorem}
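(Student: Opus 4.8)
The plan is to prove the operator identity $T_{0,V}=(T_V)_{-V}$ by two opposite inequalities in the natural order on positive operators, after recording the monotonicity that underlies everything. Throughout I use two basic properties of the map $W\mapsto T_W$ (for $T\mkern-1.5mu$-admissible $W\ge0$), both standard and inherited from the bounded case through the strong limit in \eqref{sg-conv}: it is \emph{antitone}, i.e. $0\le W_1\le W_2$ implies $T_{W_2}(t)\le T_{W_1}(t)\le T(t)$; and it satisfies the \emph{constant shift} $T_{W+c}(t)=\eul^{-ct}T_W(t)$ for every constant $c\ge0$. By antitonicity, both $T_{0,V}=\slim_{\eps\to0\rlim}T_{\eps V}$ and $(T_V)_{-V}=\slim_{n\to\infty}T_{V-V\land n}$ are \emph{increasing} strong limits of positive operators dominated by $T$, so each exists as the supremum of the corresponding family.

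For the inequality $(T_V)_{-V}\le T_{0,V}$ I would first show that the regularisation at $0$ does not see the bounded part of $V$. Fix $n$ and use the pointwise estimates $\eps(V-V\land n)\le\eps V\le\eps(V-V\land n)+\eps n$ (the right one because $V\le(V-V\land n)+n$). Antitonicity and the constant shift give, for $\eps\in(0,1]$ and $t\ge0$,
\[
\eul^{-\eps nt}\,T_{\eps(V-V\land n)}(t)\le T_{\eps V}(t)\le T_{\eps(V-V\land n)}(t).
\]
Letting $\eps\to0\rlim$ and squeezing (both outer members converge strongly to $T_{0,V-V\land n}(t)$, while $\eul^{-\eps nt}\to1$) yields $T_{0,V}=T_{0,V-V\land n}$ for every $n$. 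Since $V-V\land n\ge\eps(V-V\land n)$ for $\eps\le1$, antitonicity gives $T_{V-V\land n}\le T_{\eps(V-V\land n)}\to T_{0,V-V\land n}=T_{0,V}$, so $T_{V-V\land n}\le T_{0,V}$ for every $n$; taking the supremum over $n$ gives $(T_V)_{-V}\le T_{0,V}$.

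The reverse inequality $T_{0,V}\le(T_V)_{-V}$ is the heart of the matter, and I expect it to be the main obstacle. Put $U:=\infty\cdot\indic_{\{V=\infty\}}$; each bounded rate $m\indic_{\{V=\infty\}}$ is admissible, so $T_U:=\slim_{m\to\infty}T_{m\indic_{\{V=\infty\}}}$ exists as a decreasing limit of positive semigroups dominated by $T$. Since $\eps V\ge m\indic_{\{V=\infty\}}$ and $V-V\land n\ge m\indic_{\{V=\infty\}}$ pointwise for every $m$, antitonicity gives $T_{\eps V}\le T_U$ and $T_{V-V\land n}\le T_U$; hence both $T_{0,V}$ and $(T_V)_{-V}$ are $\le T_U$, and the theorem reduces to showing that both equal $T_U$. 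The remaining inequality $T_U\le(T_V)_{-V}$—that the increasing limit $T_{V-V\land n}\uparrow T_U$ actually reaches $T_U$—cannot come from pointwise comparison of rates: on $\{n<V<\infty\}$ the rate $V-V\land n$ is large but finite, so no $T_{V-V\land n}$ dominates the approximants $T_{m\indic_{\{V=\infty\}}}$ of $T_U$ there. I would therefore pass to resolvents and prove that the two monotone families share a common value,
\[
\sup_{n}\,(\lambda-A+(V-V\land n))^{-1}=\inf_{m}\,(\lambda-A+m\indic_{\{V=\infty\}})^{-1},
\]
for large $\lambda$, after which uniqueness of the Laplace transform yields $(T_V)_{-V}=T_U$ and closes the argument. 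Antitonicity already gives ``$\le$'' between the two sides of this resolvent identity; the content is the reverse—that no mass is lost off $\{V=\infty\}$ in the limit—and this is precisely where the standing admissibility of $V$, which a priori makes $T_V$ and hence $(T_V)_{-V}$ a genuine $C_0$-semigroup, must enter, supplying the domain control that a bare monotonicity argument lacks and that must be arranged so as not to presuppose (strong) regularity. Granting $T_{0,V}=(T_V)_{-V}$, the final assertion is immediate, since $T$-regularity and strong $T$-regularity are just the two now-equal sides being equal to $T$.
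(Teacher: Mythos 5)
Your first inequality, $(T_V)_{-V}\le T_{0,V}$, is correct, and it is in substance the paper's own argument: the paper obtains it in one stroke from $\eps V\le\eps n+(V-V\land n)$ (valid for $\eps\le1$), which gives $T_{\eps V}(t)\ge\eul^{-\eps nt}T_{V-V\land n}(t)$, and then lets first $\eps\to0\rlim$ and then $n\to\infty$; your detour through the identity $T_{0,V}=T_{0,V-V\land n}$ is the same computation, rearranged.

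The second half, however, contains a fatal gap, and it is not merely the unproved step you flag: the proposed reduction is itself false. Since $V$ is $T$-admissible, the set $[V=\infty]$ is a local null set \ndash{} this is \cite[Proposition~4.4]{are-bat-93}, quoted in the Remark directly after the theorem. Hence $m\indic_{[V=\infty]}$ vanishes locally a.e., i.e.\ it is the zero multiplication operator on $L_p(\mu)$, so $T_{m\indic_{[V=\infty]}}=T$ for every $m$, and your $T_U$ is nothing but $T$ itself. Consequently ``both sides equal $T_U$'' is literally the assertion that $V$ is $T$-regular and strongly $T$-regular \ndash{} a statement strictly stronger than the equivalence to be proved, and false under mere admissibility. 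For instance, for the heat semigroup on $L_2(\R)$ the rate $V(x)=x^{-2}$ is $T$-admissible and finite a.e., yet $T_{0,V}\ne T$: every $u\in H^1(\R)$ with $\int u^2x^{-2}\,dx<\infty$ satisfies $u(0)=0$, so each $T_{\eps V}$ is dominated by the semigroup with Dirichlet condition at $0$, and hence so is $T_{0,V}$. Your proposed resolvent identity fails for the same reason: its right-hand side is just $(\lambda-A)^{-1}$. The moral is that the obstruction to regularity is located on sets where $V$ is finite but too singular; no argument organized around $[V=\infty]$ can see it.

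What closes the hard direction \ndash{} and this is the paper's route \ndash{} is a bootstrap through regularity \emph{relative to the perturbed semigroup}. Since $T_V\le(T_V)_{-V}\le T$, the semigroup $(T_V)_{-V}$ is strongly continuous, i.e.\ $-V$ is $T_V$-admissible; by \cite[Proposition~3.3(b)]{voi-88} this already means that $V$ is strongly $T_V$-regular \ndash{} regularity of $V$ with respect to its \emph{own} absorption semigroup is automatic, even when regularity with respect to $T$ fails. By \cite[Proposition~3.3(a)]{voi-88} this property passes to all positive multiples, and applying the whole statement with $\eps V$ in place of $V$ shows that $V$ is strongly $T_{\eps V}$-regular. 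The domination principle \cite[Proposition~1.3(a)]{voi-88} then gives $(T_{\eps V})_{V-V\land n}\le T_{V-V\land n}$; letting $n\to\infty$ (the left side converges to $T_{\eps V}$ by the strong $T_{\eps V}$-regularity just established) yields $T_{\eps V}\le(T_V)_{-V}$, and $\eps\to0\rlim$ gives $T_{0,V}\le(T_V)_{-V}$. This is where admissibility genuinely enters: not as domain control on $[V=\infty]$, but through the strong continuity of $(T_V)_{-V}$ and the cited propositions.
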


\begin{proof}
Let $0<\eps\le1$. Then $\eps V=\eps (V\land n)+\eps(V-V\land 
n)\le\eps n+V-V\land n$ for all $n\in\N$, and 
this implies $T_{\eps V}\ge T_{\eps n+V-V\land 
n}=\eul^{-\eps n\mkern1.5mu\cdot\mkern1.5mu}T_{V-V\land n}$; 
recall~\cite[Remark~2.1(a)]{voi-86}. 
Taking first $\eps\to0$ and then $n\to\infty$,
we obtain $T_{0,V}\ge(T_V)_{-V}$.

In order to show the reverse inequality we note that
$T\ge (T_V)_{-V}\ge T_V$ implies 
strong continuity of $(T_V)_{-V}$; so $-V$ is $T_V$-admissible. From 
\cite[Proposition~3.3(b)]{voi-88} we therefore conclude that $V$ is 
strongly $T_V$-regular. Then \cite[Proposition~3.3(a)]{voi-88} implies that 
all positive multiples of $V$ are strongly $T_V$-regular. 
The application of this fact with $\eps V$ instead of $V$ shows that
$V$ is strongly $T_{\eps V}$-regular for all $\eps>0$. 
From~\cite[Proposition~1.3(a)]{voi-88} it follows that 
\[
(T_{\eps V})_{V- V\land n}\le T_{V- V\land n},
\]
and taking $n\to\infty$ in this inequality  we infer $T_{\eps V}\le 
\slim_{n\to\infty}T_{V- V\land n}=(T_V)_{-V}$, 
for all 
$\eps>0$. Letting $\eps\to0$ we obtain $T_{0,V}\le (T_V)_{-V}$.

The last assertion of the theorem is a consequence of the previous equality and 
the very definitions of regularity and strong regularity.
\end{proof}

\begin{remark}
In \cite{voi-86} and \cite{voi-88}, the notions of admissibility and regularity 
have only been 
considered for absorption rates
$V$ taking their values in $\R$. It was observed in \cite{are-bat-93} that it 
is more appropriate to allow for extended real-valued absorption rates $V$.
It was shown in \cite[Proposition~4.4]{are-bat-93} that for a $T$-admissible 
absorption rate $V\colon\Omega\to[0,\infty]$ the set $[V=\infty]$ is a local 
null set. 
Also, it can be shown as in \cite[Proposition~3.3(b)]{voi-88} that for a 
$T$-admissible absorption rate $V\colon\Omega\to[-\infty,0]$ the absorption 
rate 
$-V$ is $T$-admissible, and therefore $[V=-\infty]$ is a local null set.

In view of these observations, for an extended positive or 
negative $T$-admissible
absorption rate $V\colon\Omega\to[-\infty,\infty]$ one can always replace $V$ 
by an equivalent absorption rate taking its 
values in $\R$, and therefore the results of \cite{voi-86}, \cite{voi-88} are 
applicable.
\end{remark}

We conclude the paper by showing that {\ndash} in a certain sense {\ndash} 
the hypothesis of $T$-admissibility in the definition of regularity is 
redundant; see Theorem~\ref{thm-2} and Remark~\ref{rem-2} below. In order to 
make this precise we 
have to extend the definition of $T_V$ to arbitrary locally measurable 
$V\colon\Omega\to[0,\infty]$. Indeed, the limit
\[
T_V(t):=\slim_{n\to\infty}\eul^{t(A-V\land n)}
\]
\sloppy
exists for all $t\ge0$ (recall~\cite[Remark~2.1(c)]{voi-86}), and $T_V$ thus 
defined is a one-parameter semigroup. It
was shown in \cite[Corollary~3.3]{are-bat-93} that then 
$P:=\slim_{t\to0\rlim}T_V(t)$ exists and is a band projection. 
This implies that there exists a decomposition $\mu = \mu_1+\mu_2$ with 
$\mu_1\perp\mu_2$ (`$\perp$' meaning `locally disjoint') such that $R(P) = 
L_p(\mu_1)$. (We refer to \cite{vog-voi-16} for this description of
projection bands in $L_p(\mu)$.)
It follows that the restriction 
of $T_V$ to $L_p(\mu_1)$ exists and is a $C_0$-semigroup.

\fussy
By $(T_V)_{-V}$ we denote the $C_0$-semigroup obtained from 
$T_V\restrict_{L_p(\mu_1\mkern-1.2mu)}$ by applying $-V$ as a 
perturbation. (Note that $-V$ is 
$T_V\restrict_{L_p(\mu_1\mkern-1.2mu)}$-admissible because 
$T_{V\land m-V\land n}\le T$ 
for all $m\ge n\ge0$.) We also extend $(T_V)_{-V}(t)$ to $L_p(\mu)$ as 
$J(T_V)_{-V}(t)P$, for $t>0$, where $J$ denotes the injection of the band 
$R(P)=L_p(\mu_1)$ into $L_p(\mu)$.
\sloppy

\begin{theorem}\label{thm-2}
Let $V\colon\Omega\to[0,\infty]$ be locally measurable, and assume that 
$(T_V)_{-V}(t)=T(t)$ for all $t>0$.

Then $V$ is $T\mkern-1.5mu$-admissible.
\end{theorem}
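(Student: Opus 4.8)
The plan is to reduce $T\mkern-1.5mu$-admissibility of $V$ to the single statement that the band projection $P=\slim_{t\to0\rlim}T_V(t)$ equals the identity. Indeed, $V$ is locally measurable by hypothesis, and the one-parameter semigroup $T_V(t)=\slim_{n\to\infty}\eul^{t(A-V\land n)}$ exists for every $t\ge0$, as recalled before the statement. Moreover $0\le T_V(t)\le T(t)$, since $\eul^{t(A-V\land n)}\le\eul^{tA}=T(t)$ for all $n$ (recall~\cite[Remark~2.1(a)]{voi-86}) and strong limits preserve this ordering; hence $T_V$ is locally bounded. Thus the only ingredient of admissibility still to be established is strong continuity, and because $P=\slim_{t\to0\rlim}T_V(t)$, strong continuity of $T_V$ at $0$ — and therefore everywhere, by the semigroup property together with local boundedness — is equivalent to $P=I$.

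The heart of the matter is to read off the behaviour of the extended semigroup near~$0$ and compare it with $T$. By construction $(T_V)_{-V}(t)=J\,(T_V\restrict_{L_p(\mu_1)})_{-V}(t)\,P$ for $t>0$, where $(T_V\restrict_{L_p(\mu_1)})_{-V}$ is a genuine $C_0$-semigroup on $L_p(\mu_1)$. Since $J$ and $P$ are bounded, the strong limit $t\to0\rlim$ may be passed through them, which gives $\slim_{t\to0\rlim}(T_V)_{-V}(t)=J\,I_{L_p(\mu_1)}\,P=P$. Now I would invoke the hypothesis $(T_V)_{-V}(t)=T(t)$ for all $t>0$: concretely, for $f\in L_p(\mu)$ one has $f=\lim_{t\to0\rlim}T(t)f=\lim_{t\to0\rlim}(T_V)_{-V}(t)f=Pf$, using that $T$ is a $C_0$-semigroup, so $P=I$. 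Consequently $R(P)=L_p(\mu)$, the restriction $T_V\restrict_{L_p(\mu_1)}$ is just $T_V$ on all of $L_p(\mu)$, and $\slim_{t\to0\rlim}T_V(t)=I$ upgrades $T_V$ to a $C_0$-semigroup; hence $V$ is $T\mkern-1.5mu$-admissible.

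The argument is short precisely because the genuinely hard work lies in the material preceding the statement: the existence of the band projection $P$ via~\cite{are-bat-93}, the identification $R(P)=L_p(\mu_1)$, and the fact that $-V$ is $T_V\restrict_{L_p(\mu_1)}$-admissible, so that $(T_V)_{-V}$ really is a $C_0$-semigroup. Granting all of this, the only point I would expect to require explicit care is the interchange of the strong limit $t\to0\rlim$ with the bounded factors $J$ and $P$ in the formula $(T_V)_{-V}(t)=J\,(T_V\restrict_{L_p(\mu_1)})_{-V}(t)\,P$; everything else is an immediate consequence of the $C_0$-property of $T$ forcing $P=I$.
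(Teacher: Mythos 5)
Your proof is correct and follows essentially the same route as the paper's: identify $\slim_{t\to0\rlim}(T_V)_{-V}(t)$ with the band projection $P$, use the hypothesis and the strong continuity of $T$ to conclude $P=I$, and thereby upgrade $T_V$ to a $C_0$-semigroup on $L_p(\mu)$. The paper's proof is just a two-line compression of this; your added details (passing the strong limit through $J$ and $P$, and the local boundedness $0\le T_V(t)\le T(t)$ needed to get strong continuity everywhere from continuity at $0$) are exactly the steps the paper leaves implicit.
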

\fussy

\begin{proof}
The hypothesis implies 
$P = \slim_{t\to0\rlim} (T_V)_{-V}(t) = \slim_{t\to0\rlim} T(t) = I$.
Therefore $T_V$ is a $C_0$-semigroup on 
$L_p(\mu)$.
\end{proof}

\begin{remark}\label{rem-2}
Assuming solely that $V\colon\Omega\to[0,\infty]$ is locally measurable, one 
can show pretty much as in the proof of Theorem~\ref{theorem} that
$(T_V\mkern-1.5mu)_{-V}(t) \mkern-1mu = \mkern-1mu
 \slim_{\eps\to0\rlim} \mkern-2mu T_{\eps V}(t)$
for all $t>0$. We mention that in the course of this proof one also 
obtains $\slim_{t\to0\rlim}T_{\eps V}(t)=P$ for all $\eps\in(0,1]$.
\end{remark}

{\frenchspacing

}

\end{document}